\documentclass[a4paper,12pt]{article} %izvzela 'twoside' fleqn,

\usepackage[T1]{fontenc}
\usepackage{amssymb}

\usepackage[dvips]{graphicx}

\usepackage{amsfonts, amsmath, amsthm, amssymb}
\usepackage[ruled,vlined,linesnumbered]{algorithm2e}
\usepackage{epsfig}
\usepackage[T1]{fontenc}
\usepackage[cp1250]{inputenc}

\usepackage{tikz}
\usepackage{fancyhdr}
\pagestyle{fancy}
\lhead{I. Bani\v c, M. \v Crepnjak}
\rhead{Markov functions and inverse limits}

\makeindex
%\doublespacing

%\DeclareMathSizes{12}{11}{8}{8}

%\linenumbers
%\addto\captionsslovene{%
%\def\chaptername{Poglavje}
%}

\allowdisplaybreaks[1]

\newtheorem{theorem}{Theorem}

\newtheorem{corollary}[theorem]{Corollary}

\newtheorem{definition}[theorem]{Definition}

\newtheorem{example}[theorem]{Example}

\newtheorem{lemma}[theorem]{Lemma}

\newtheorem{remark}[theorem]{Remark}

\input{tcilatex}

%%%%%%%%%%%%%%%%%%%%%%%%%%%%%
%
%\title{Integrations and Jordan integrations on rings}
%\author{Iztok Bani\v c}
%\usepackage{fancyhdr}
%\pagestyle{fancy}
%\makeatletter
%\let\runauthor\@author
%\let\runtitle\@title
%\makeatother
%\lhead{\runauthor}
%\rhead{\runtitle}

%%%%%%%%%%%%%%%%%%%%%%%%%%%%%%5

\begin{document}

\title{Markov pairs, quasi Markov functions and inverse limits}
\author{Iztok Bani\v c, Matev\v z \v Crepnjak}
%\date{University of Maribor, Slovenia}
\maketitle

\begin{abstract}
We show that two inverse limits of inverse sequences of closed intervals and quasi Markov bonding functions are homeomorphic, if the inverse sequences follow the same pattern. This significantly improves Holte's result about when two inverse limits of inverse sequences with Markov interval maps as bonding functions are homeomorphic. 

Our result improves Holte's result in several directions: (1) it generalizes finite Markov partitions to Markov pairs of sets that may even be uncountable, (2) it generalizes Markov interval maps $I\rightarrow I$ to quasi Markov functions $I\rightarrow J$ (so the domains and the codomains of the bonding functions are not necessarily the same interval), (3) we no longer require the bonding functions to be surjective, and (4) we no longer require the bonding functions to be continuous.
\end{abstract}

\maketitle

%%%%%%%%%%%%%%%%%%%%%%%%%%%%%%%%%%%%%%%%%%%%%%%%%%%%%%%%%%%%%%%%%%%%%%%%%%%%%%%%%
%%% I N T R O D U C T I O N S

\section{Introduction}

The Markov partition of a closed interval $I = [x,y]$ with respect to a function $f:I\rightarrow I$ is usually given by the points $x = x_0 < x_1 < \ldots < x_n = y$ in $I$ such that all the restrictions $f|_{[x_{i-1},x_i]}$ of $f$ to $[x_{i-1},x_i]$ are homeomorphisms from $[x_{i-1},x_i]$ onto some interval $[x_k, x_{\ell}]$. Since a Markov partition is usually given by a finite collection of points $A = \{x_0, x_1, \ldots , x_n\} \subseteq I$ we usually refer to $A$ as a Markov partition. If a continuous function $f$ has a Markov partition $A$, then we say that $f$ is a Markov interval map with respect to $A$. 

For a dynamical system $(I, f)$, where $I$ is a closed interval and $f: I \rightarrow I$ a continuous function, a Markov partition of $I$ with respect to $f$ (if it exists) is a well-known tool in the dynamical system theory that allows the methods of symbolical dynamics to be used to study the dynamical system $(I,f)$. 
For more information about Markov partitions in dynamical systems and symbolical dynamics, see \cite{henk,lind}.

In \cite{Holte}, Holte introduced when two given Markov interval maps follow the same pattern ($f$ with respect to $A=\{a_0 , a_1 , \ldots , a_m \}$ and $g$ with respect to $B=\{b_0 , b_1 , \ldots , b_m\}$ follow the same pattern (with respect to $A$ and $B$), if $f(a_j)=a_k$ if and only if $g(b_j)=b_k$ for all $j$ and $k$) and proved the following:
 \begin{theorem}\label{holte}
Let $I$ and $J$ be closed intervals. If
\begin{enumerate}
\item $ \{f_n\}_{n=1}^{\infty}$ is a sequence of surjective maps $I\rightarrow I$ that are all Markov interval maps with respect to $A\subseteq I$, which is a Markov partition for each $f_n$,
\item $ \{g_n\}_{n=1}^{\infty}$ is a sequence of surjective maps $J\rightarrow J$ that are all Markov interval maps with respect to $B\subseteq J$, which is a Markov partition for each $g_n$, and
\item for each $n$, $f_n$ and $g_n$ are Markov interval maps that follow the same pattern,
\end{enumerate}
then $\varprojlim \{I,f_n\}_{n=1}^{\infty}$ is homeomorphic to $\varprojlim \{J,g_n\}_{n=1}^{\infty}$.
\end{theorem}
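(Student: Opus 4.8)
The plan is to build the homeomorphism coordinatewise, by producing for every level $n$ a homeomorphism $h_n : I \to J$ that carries the partition point $a_j$ to $b_j$ and that intertwines the bonding maps, i.e. $g_n \circ h_n = h_{n-1} \circ f_n$ for all $n \geq 1$. Given such a family, the map $H : \varprojlim \{I, f_n\}_{n=1}^\infty \to \varprojlim\{J, g_n\}_{n=1}^\infty$ defined by $H(x_0,x_1,x_2,\ldots) = (h_0(x_0), h_1(x_1), h_2(x_2),\ldots)$ sends threads to threads, because $g_n(h_n(x_n)) = h_{n-1}(f_n(x_n)) = h_{n-1}(x_{n-1})$; it is continuous in the product topology since each coordinate is, and the family $\{h_n^{-1}\}$, which intertwines the maps in the opposite direction, induces its continuous inverse. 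So everything reduces to constructing the $h_n$.

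First I would fix $h_0$ to be the increasing piecewise linear homeomorphism $I \to J$ with $h_0(a_j) = b_j$ for all $j$, and then define $h_n$ from $h_{n-1}$ by induction. On each partition subinterval $[a_{j-1}, a_j]$ the map $f_n$ restricts to a homeomorphism onto some $[a_p, a_q]$; since $f_n$ and $g_n$ follow the same pattern, $g_n$ restricts on $[b_{j-1}, b_j]$ to a homeomorphism onto $[b_p, b_q]$. I would therefore set, on $[a_{j-1}, a_j]$,
$$ h_n = \bigl(g_n|_{[b_{j-1},b_j]}\bigr)^{-1} \circ h_{n-1} \circ f_n . $$
This is well defined because the inductive hypothesis $h_{n-1}(a_i) = b_i$ makes $h_{n-1}$ carry $[a_p, a_q]$ onto $[b_p, b_q]$, which is exactly the domain of $\bigl(g_n|_{[b_{j-1},b_j]}\bigr)^{-1}$, and the composite is a homeomorphism of $[a_{j-1}, a_j]$ onto $[b_{j-1}, b_j]$.

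The heart of the argument is checking that these locally defined pieces assemble into a single homeomorphism $I \to J$ preserving the partition correspondence, and here the same-pattern hypothesis does the work. Because $A$ is a Markov partition for $f_n$ we have $f_n(a_j) = a_k$ for some $k$, and the same-pattern condition then gives $g_n(b_j) = b_k$; hence the formula above forces $h_n(a_j) = \bigl(g_n|_{[b_{j-1},b_j]}\bigr)^{-1}(h_{n-1}(a_k)) = \bigl(g_n|_{[b_{j-1},b_j]}\bigr)^{-1}(b_k) = b_j$, and the analogous computation on $[a_j,a_{j+1}]$ gives the same value. So the pieces agree at every partition point, each maps its subinterval onto the corresponding subinterval sending left endpoint to left endpoint, and gluing yields an increasing, hence bijective and open, homeomorphism $h_n : I \to J$ with $h_n(a_j) = b_j$, restoring the inductive hypothesis. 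The intertwining relation $g_n \circ h_n = h_{n-1} \circ f_n$ holds on each subinterval by the very definition of $h_n$, hence on all of $I$.

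I expect the main obstacle to be precisely this bookkeeping in the inductive step: verifying simultaneously that the restrictions match at the partition points, that each restriction is onto the correct target interval with the correct orientation, and that the resulting monotone map is a genuine homeomorphism, since it is here that one must use in a coordinated way both the Markov property (partition points map to partition points, so the inverses of the $g_n$-restrictions are applied to points where they are defined) and the same-pattern hypothesis. Once the family $\{h_n\}$ is in hand, passing to the induced map $H$ on the inverse limits and confirming it is a homeomorphism is routine.
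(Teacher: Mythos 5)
Your proposal is correct and follows essentially the same route as the paper: the paper proves this statement as a special case of its Theorem \ref{main}, whose proof likewise constructs the conjugating homeomorphisms inductively, defining $h_{n+1}$ on each partition subinterval $C=(a_1,a_2)$ by $(G_n|_{(\tau_{n+1}(a_1),\tau_{n+1}(a_2))})^{-1}\circ h_n\circ F_n$ (exactly your formula), checking agreement with the partition correspondence at the points of $A$, verifying the resulting strictly increasing surjection is a homeomorphism satisfying $h_n\circ F_n=G_n\circ h_{n+1}$, and then passing to the coordinatewise induced map on the inverse limits (the paper's Lemma \ref{tool}). The only cosmetic difference is that the paper defines $h_{n+1}$ on the partition points directly via $\tau_{n+1}$ rather than deducing the value from the gluing formula as you do.
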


As a main result of this paper, we prove Theorem \ref{main} which generalizes Theorem \ref{holte}. 
Holte's proof of Theorem \ref{holte} requires all the bonding functions to be surjective and continuous.
Our new method to prove Theorem \ref{main} shows (among other things) that continuity as well as surjectivity 
of bonding functions are not really required.

Note also, that if $\{I_n,f_n\}_{n=1}^{\infty}$ is an inverse sequence of intervals with Markov interval maps, then $I_k = I_{\ell}$ for any positive integers $k$ and $\ell$. Therefore the domains and codomains of all bonding functions $f_n$ have to be equal.   
In this paper we eliminate this restriction by generalizing the notion of Markov partitions $A\subseteq I$ for a function $I\rightarrow I$ to the notion of Markov pairs $(A,B)\subseteq I\times J$ for a function $I\rightarrow J$ in such a way that any Markov partition $A$ will produce a Markov pair $(A,A)$.
With this notion we introduce quasi Markov functions which are a generalization of Markov interval maps.
 
Hence, we generalize Holte's result (Theorem \ref{holte}) in the following directions:
\begin{enumerate}
\item by generalizing finite Markov partitions $A\subseteq I$ to Markov pairs $(A,B)$, where $A$ and $B$ are any totally disconnected closed subsets of $I$ and $J$, respectively,
\item by generalizing Markov interval maps $I\rightarrow I$ to quasi Markov maps $I\rightarrow J$ (so the domains and the codomains of the bonding functions are not necessarily the same interval), 
\item by omitting the condition that all the bonding functions are surjective, and 
\item by omitting the condition that all the bonding functions are continuous.
\end{enumerate}

As a main tool to prove our main result (Theorem \ref{main}), we use functions $I \rightarrow 2^{J}$ instead of continuous functions $I \rightarrow J$. Hence, our result generalizes the Holte's result also in another direction by
\begin{enumerate}
\item[(5)] generalizing Markov interval maps $I\rightarrow I$ to quasi Markov functions $I\rightarrow 2^J$.
\end{enumerate}

Some generalizations of the Holte's result have already been introduced, for examples see \cite{BanicLunder} (where Markov interval maps $I\rightarrow I$ were replaced by so called generalized Markov functions $I\rightarrow 2^I$) and \cite{CrepnjakLunder} (where finite Markov partitions $A\subseteq I$  were replaced by countable closed sets $A$ in $I$ which only have finitely many limit points; then Markov interval maps $I\rightarrow I$ were replaced by so called countably Markov interval functions $I\rightarrow 2^I$).

\section{Definitions and notation}

	In present paper we always deal with {\it nondegenerate closed intervals} $[x,y]\subseteq \mathbb R$, i.e.\ $x<y$.
	A subspace $A$ of a closed interval is {\it totally disconnected}, if it is not connected and each component of $A$ is a singleton.
	
	A function $f:I\rightarrow J$ from an interval to an interval is {\it strictly increasing} ({\it strictly decreasing}), if for any $s,t\in I$, from $s<t$ ($s>t$) it follows $f(s)<f(t)$ ($f(s)>f(t)$). We say that $f$ is {\it strictly monotone}, if it is either strictly increasing or strictly decreasing.
		
	We always denote the {\it left-hand limit of $f:I\rightarrow J$ at a point $a \in I$} by 
	$\displaystyle \lim_{t \to a-} f(t)$ and the {\it right-hand limit of $f$ at a point $a \in I$} by $\displaystyle \lim_{t \to a+} f(t)$. 
	
	In present paper we deal with {\it inverse sequences} of closed intervals and functions, i.e.\ double sequences $\{I_n,f_n\}_{n=1}^{\infty}$ of closed intervals $I_n$ 
	and (not necessarily continuous) functions $f_n:I_{n+1}\rightarrow {I_n}$. The {\it inverse limit} of an inverse sequence $\{I_n, f_n\}_{n=1} ^\infty$ is defined to be the 
	subspace of $\prod_{n=1}^\infty I_n$ of all points $\mathbf x=(x_1, x_2, x_3, \ldots ) \in \prod_{n=1}^\infty I_n$, 
	such that $x_n = f_n(x_{n+1})$ for each $n$. The inverse limit is denoted by 
	$\varprojlim\{I_n, f_n\}_{n=1}^{\infty}$. Note that since the bonding functions may not be continuous, it may happen that $\varprojlim\{I_n, f_n\}_{n=1}^{\infty}$ is empty
	(such an inverse limit is presented in Example \ref{ex:3}). 
	
	If $I$ is a closed interval then $2^{I}$ denotes the family of all nonempty closed subsets of $I$.
	
	Our results also include functions $F:I \rightarrow 2^{J}$, where $I$ and $J$ are two closed intervals.
	The {\it graph} of such a function $F$ is defined to be the subset of $I \times J$ defined by 
	$$
	\Gamma(F) = \{(x,y) \ | \ y \in F(x), \ x \in I \}.
	$$
	Note that the graph of $F$ is not defined in the usual sense as the subset $\{(x,Y) \ | \ Y = F(x), \ x \in I \}$ of $I \times 2^{J}$.
	Such a function $F$ is {\em upper semicontinuous} 
	if the graph of $F$	is a closed subset of $I \times J$, see \cite[Theorem 1.2]{ingram}.

If $F:I\rightarrow 2^J$ is a function, where for each $t \in I$, the image $F(t)$ is a one-point subset of $J$, 
	then we can identify it with the function $f:I\rightarrow J$, where $F(t)=\{f(t)\}$ for any $t\in I$. 
	Conversely, any function $f:I\rightarrow J$ can be identified with the function $F:I\rightarrow 2^J$, defined by $F(t)=\{f(t)\}$ for any $t \in I$ (note that the graphs of $f$ and $F$ are the same). 
	Obviously, in this situation, the following are equivalent (since $\Gamma(f)=\Gamma(F)$ is a closed subset of $I\times J$):
	\begin{enumerate}
	\item $F$ is upper semicontinuous,
	\item $f$ is continuous.
	\end{enumerate}
In this case we denote the {\it left-hand limit of $f:I\rightarrow J$ at a point $a \in I$} also by 
	$\displaystyle \lim_{t \to a-} F(t)$ and the {\it right-hand limit of $f$ at a point $a \in I$} also by $\displaystyle \lim_{t \to a+} F(t)$. 		
		
	In our results, we mostly use {\it generalized inverse sequences}, i.e.\ double sequences $\{I_n,F_n\}_{n=1}^{\infty}$, 
	where $I_n$ is a closed interval and $F_n:I_{n+1}\rightarrow 2^{I_n}$ is a (not necessarily upper semicontinuous) function for each $n$. 
	The  {\it inverse limit} of such a generalized inverse sequence $\{I_n, F_n\}_{n=1} ^\infty$ is defined to be the 
	subspace of $\prod_{n=1}^\infty I_n$ of all $\mathbf x=(x_1, x_2, x_3, \ldots ) \in \prod_{n=1}^\infty I_n$, 
	such that $x_n \in F_n(x_{n+1})$ for each $n$. Also in this case, the inverse limit is denoted by 
	$\varprojlim\{I_n, F_n\}_{n=1}^{\infty}$. Note that also in this situation, since the bonding functions may not be upper semicontinuous, it may happen that $\varprojlim\{I_n, F_n\}_{n=1}^{\infty}$ is empty. 
			
	Inverse limits with upper semicontinuous bonding functions were first introduced in 2004 by Mahavier and later by Ingram and Mahavier. 
	Since their introduction many authors have been interested in this area and many papers appeared (for more details and other references see \cite{ingram}).

\section{Markov pairs and quasi Markov functions}

	In this section we introduce the concepts of Markov pairs and quasi Markov functions, prove some of their properties and give several examples. First we introduce the concept of Markov pairs which generalizes the well-known concept of Markov partitions.

	\begin{definition}\label{d:pair}
		Let $I_1=[x_1,y_1]$ and $I_2=[x_2,y_2]$ be closed intervals, $F:I_1\rightarrow 2^{I_2}$ a (not necessarily upper semicontinuous) function, 
		$A_1$ a totally disconnected closed subset of $I_1$ such that $x_1,y_1\in A_1$, and $A_2$ a totally disconnected closed subset of $I_2$
		such that $x_2,y_2\in A_2$. We say that {\em $(A_1,A_2)$ is a Markov pair for $F$\/}, if 
		\begin{enumerate}
			%\item there is a strictly increasing bijection $\varphi:A_1\rightarrow A_2$,
			\item\label{iki} for each $a\in A_1$, $F(a)$ is a is a closed subset of $J_2$ 
				such that for each $ t\in F(a)$, if $t\in \textup{Bd}(F(a))$, then $t\in A_2$, 
				\item for each component $C$ of $I_1\setminus A_1$, the restriction $F|_C : C \rightarrow 2^{I_2}$ of $F$ to $C$ is a strictly monotone continuous function
			(meaning that $F|_C(t)$ is a singleton $\{s_t\}$ for each $t \in C$ and that $f:C\rightarrow I_2$, defined by $f(t)=s_t$ for each $t\in C$, is a strictly monotone continuous function),
			\item for each component $C=(a,b)$ of $I_1\setminus A_1$, $a,b\in A_1$,  $\displaystyle \lim_{t\to a+}F(t),\lim_{t\to b-}F(t)\in A_2$.
		\end{enumerate} 
	\end{definition}
	As seen before, functions $f:I\rightarrow J$ can be interpreted as functions $F:I\rightarrow 2^J$, $F(t)=\{f(t)\}$ for each $t\in I$. Therefore, Definition \ref{d:pair} also includes such functions. Note that in this case, (\ref{iki}) from Definition \ref{d:pair} is equivalent to $f(a)\in A_2$ for each $a\in A_1$. 
	
	\begin{example}
		Let $f: I \rightarrow I$ be a Markov interval map with respect to a Markov partition $A$. 
		Then, since $A$ is a finite subset of $I$ with at least two elements, it is a totally disconnected closed subset of $I$ for which obviously all requirements from Definition \ref{d:pair} are satisfied. Therefore $(A,A)$ is a Markov pair for $f$. 
	\end{example}
	
	\begin{definition}\label{d:markov}
		Let $I_1$ and $I_2$ be closed intervals, and $F:I_1\rightarrow 2^{I_2}$ a (not necessarily upper semicontinuous) function. 
		We say that $F$ is {\em quasi Markov\/}, if there is a Markov pair for $F$.
	
		We say that $F$ is {\em quasi Markov with respect to $(A_1,A_2)$\/}, if $F$ is quasi Markov and $(A_1,A_2)$ is a Markov pair for $F$.
	\end{definition}
	Again, since functions $I\rightarrow J$ can be interpreted as functions $I\rightarrow 2^J$,  Definition \ref{d:markov} also includes such functions. 
	\begin{example}
		Let $f: I \rightarrow I$ be a Markov interval map with respect to a Markov partition $A$. 
		Then $f$ is a quasi Markov function with respect to $(A,A)$.
	\end{example}
	Next we introduce when two generalized inverse sequences of quasi Markov functions follow the same pattern. 
	\begin{definition}\label{d:pattern}
		Let $\{I_n,F_n\}_{n=1}^{\infty}$ and $\{J_n,G_n\}_{n=1}^{\infty}$ be generalized inverse sequences of closed intervals $I_n$ and $J_n$, and quasi Markov functions $F_n:I_{n+1}\rightarrow 2^{I_n}$ and $G_n:J_{n+1}\rightarrow 2^{J_n}$.	
		We say that $\{I_n,F_n\}_{n=1}^{\infty}$ and $\{J_n,G_n\}_{n=1}^{\infty}$	{\em follow the same pattern with respect to
		$(A_n)_{n=1}^{\infty}\in \prod_{n=1}^{\infty}2^{I_n}$ and $(B_n)_{n=1}^{\infty}\in \prod_{n=1}^{\infty}2^{J_n}$}, if 
		\begin{enumerate}
		\item\label{100} for each $n$, 		
		$(A_{n+1},A_n)$ is a Markov pair for $F_n$ and $(B_{n+1},B_n)$ is a Markov pair for $G_n$, 
		\item\label{200} there is a strictly increasing bijection $\tau_1:A_1 \rightarrow B_1$, and 
		\item\label{300} for each $n$, 
		there are strictly increasing bijections $\varphi_n:A_{n+1}\rightarrow A_n$ and $\psi_n : B_{n+1}\rightarrow B_n$ 
		such that	
		\begin{enumerate}
		\item\label{a} for each $a\in A_{n+1}$, there is a homeomorphism $h: F_n(a)\rightarrow G_n(\tau_{n+1}(a))$ such that
			$$h(\textup{Bd}(F_n(a))) = \tau_n(\textup{Bd}(F_n(a))) = \textup{Bd}(G_n(\tau_{n+1}(s))),$$

			\item\label{c} for each component $C=(a,b)$ of $I_{n+1}\setminus A_{n+1}$, $a,b\in A_{n+1}$, 			
			$$ 
			c=\lim_{t\to a+}F_n(t) \textup{ if and only if } \tau_n(c)=\lim_{t\to \tau_{n+1}(a)+}G_n(t),
			$$
			and
			$$
			c=\lim_{t\to b-}F_n(t) \textup{ if and only if } \tau_n(c)=\lim_{t\to \tau_{n+1}(b)-}G_n(t),
			$$
		\end{enumerate}
		where 
		$
		\tau_{n+1} = \psi_{n}^{-1} \circ\psi_{n-1}^{-1} \circ \ldots \circ \psi_{1}^{-1} \circ\tau_1 \circ\varphi_{1} \circ \varphi_{2}\circ  \ldots \circ \varphi_n : A_{n+1} \rightarrow B_{n+1}
		$
		for each positive integer $n$, see (\ref{equat}).
			\end{enumerate}

		\begin{center}				
		\begin{equation}	\label{equat}			
		\begin{tikzpicture}[node distance=5.0em, auto]
  \node (X) {$A_1$};
  \node (Y) [right of=X] {$A_{2}$};
  \node (Z) [right of=Y] {$A_{3}$};
	 \node (a) [right of=Z] {$\ldots$};
	 \node (b) [right of=a] {$A_{n}$};
	 \node (c) [right of=b] {$A_{n+1}$};
	 \node (d) [right of=c] {$\ldots$};
  \draw[<-] (X) to node {$\varphi_1$} (Y);
  \draw[<-] (Y) to node {$\varphi_2$} (Z);
  \draw[<-] (Z) to node {$\varphi_3$} (a);
	\draw[<-] (a) to node {$\varphi_{n-1}$} (b);
	\draw[<-] (b) to node {$\varphi_n$} (c);
	\draw[<-] (c) to node {$\varphi_{n+1}$} (d);
	\node (X1) [below of=X] {$B_1$};
  \node (Y1) [right of=X1] {$B_{2}$};
  \node (Z1) [right of=Y1] {$B_{3}$};
	 \node (a1) [right of=Z1] {$\ldots$};
	 \node (b1) [right of=a1] {$B_{n}$};
	 \node (c1) [right of=b1] {$B_{n+1}$};
	 \node (d1) [right of=c1] {$\ldots$};
  \draw[<-] (X1) to node {$\psi_1$} (Y1);
  \draw[<-] (Y1) to node {$\psi_2$} (Z1);
  \draw[<-] (Z1) to node {$\psi_3$} (a1);
	\draw[<-] (a1) to node {$\psi_{n-1}$} (b1);
	\draw[<-] (b1) to node {$\psi_n$} (c1);
	\draw[<-] (c1) to node {$\psi_{n+1}$} (d1);
	\draw[->] (X) to node {$\tau_1$} (X1);
  \draw[->] (Y) to node {$\tau_2$} (Y1);
  \draw[->] (Z) to node {$\tau_3$} (Z1);
	\draw[->] (b) to node {$\tau_n$} (b1);
	\draw[->] (c) to node {$\tau_{n+1}$} (c1);
\end{tikzpicture}
	\end{equation}
	\end{center}

	We say that $\{I_n,F_n\}_{n=1}^{\infty}$ and $\{J_n,G_n\}_{n=1}^{\infty}$	{\em follow the same pattern\/}, if $\{I_n,F_n\}_{n=1}^{\infty}$ and $\{J_n,G_n\}_{n=1}^{\infty}$	follow the same pattern with respect to some
		$(A_n)_{n=1}^{\infty}\in \prod_{n=1}^{\infty}2^{I_n}$ and $(B_n)_{n=1}^{\infty}\in \prod_{n=1}^{\infty}2^{J_n}$.
	\end{definition}
	
	\begin{remark}\label{b}
		Note that from (\ref{a}) in Definition \ref{d:pattern} it follows that for each $a \in A_{n+1}$ and for each strictly increasing homeomorphism $h: I_n \rightarrow J_n$
		such that $h(\textup{Bd}(F_n(a))) = \textup{Bd}(G_n(\tau_{n+1}(a)))$, the restriction $h|_{F_n(a)} : F_n(a) \rightarrow G_n(\tau_{n+1}(a))$ of $h$ is also a homeomorphism.
	\end{remark}
	
	Also in this case, Definition \ref{d:pattern} includes inverse sequences $\{I_n,f_n\}_{n=1}^{\infty}$ and $\{J_n,g_n\}_{n=1}^{\infty}$ of closed intervals and quasi Markov functions $f_n:I_{n+1}\rightarrow I_n$ and $g_n:J_{n+1}\rightarrow J_n$. Note that in this case, (\ref{a}) from Definition \ref{d:pattern} is equivalent to the requirement that for each $a\in A_{n+1}$, 
			
			$$
			f_n(a)=t \textup{ if and only if } g_n(\tau_{n+1}(a))=\tau_n(t),
			$$

		The following theorem easily follows.
	\begin{theorem}\label{corcor}
Suppose that $\{I,f_n\}_{n=1}^{\infty}$ and $\{J,g_n\}_{n=1}^{\infty}$ are two inverse sequences of closed intervals and quasi Markov functions $f_n:I\rightarrow I$ and $g_n:J\rightarrow J$. If
		\begin{enumerate}
			\item\label{102} $(A,A)$ is a Markov pair for $f_n$ and $(B,B)$ is a Markov pair for $g_n$ for each positive integer $n$,
			\item\label{202} there is a strictly increasing bijection $\tau : A\rightarrow B$ such that		 
		\begin{enumerate}
			%\item for each $a\in A_{n+1}$, $f_n(a)$ and $g_n(\tau_{n+1}(a))$ are homeomorphic,  
			\item\label{a101} for each $a\in A$, 
			
			$$
			f_n(a)=t \textup{ if and only if } g_n(\tau(a))=\tau(t),
			$$
			
			\item\label{b101} for each component $C=(a,b)$ of $I\setminus A$, $a,b\in A$, 			
			$$ 
			c=\lim_{t\to a+}f_n(t) \textup{ if and only if } \tau(c)=\lim_{t\to \tau(a)+}g_n(t),
			$$
			and
			$$
			c=\lim_{t\to b-}f_n(t) \textup{ if and only if } \tau(c)=\lim_{t\to \tau(b)-}g_n(t),
			$$
		\end{enumerate}
			\end{enumerate}
			then $\{I,f_n\}_{n=1}^{\infty}$ and $\{J,g_n\}_{n=1}^{\infty}$ follow the same pattern with respect to $(A)_{n=1}^{\infty}\in \prod_{n=1}^{\infty}2^{I}$ and $(B)_{n=1}^{\infty}\in \prod_{n=1}^{\infty}2^{J}$.
		\end{theorem}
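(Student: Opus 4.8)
The plan is to exhibit the data required by Definition \ref{d:pattern} in the simplest possible way and then to check that each of its clauses collapses, under the constant-sequence hypotheses, to one of the assumptions of the theorem. Concretely, I would set $A_n = A$ and $B_n = B$ for every $n$ (the intervals being already constant, $I_n = I$ and $J_n = J$), take $\tau_1 = \tau$, and choose the bonding bijections to be the identities $\varphi_n = \mathrm{id}_A : A \to A$ and $\psi_n = \mathrm{id}_B : B \to B$ for every $n$. Each of these is trivially a strictly increasing bijection, and $\tau_1 = \tau$ is strictly increasing and bijective by hypothesis (\ref{202}), so clause (\ref{200}) of Definition \ref{d:pattern} holds at once.

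Next I would compute the maps $\tau_{n+1}$ produced by the defining composition. Since every $\varphi_j$ and every $\psi_i$ is an identity, the formula
$$\tau_{n+1} = \psi_n^{-1} \circ \psi_{n-1}^{-1} \circ \cdots \circ \psi_1^{-1} \circ \tau_1 \circ \varphi_1 \circ \cdots \circ \varphi_n$$
reduces to $\tau_{n+1} = \tau_1 = \tau$ for every $n$; hence $\tau_n = \tau$ for all $n$. Clause (\ref{100}) is then immediate: with $A_{n+1} = A_n = A$ and $F_n = f_n$, the requirement that $(A_{n+1},A_n)$ be a Markov pair for $F_n$ is exactly the assumption that $(A,A)$ is a Markov pair for $f_n$, supplied by (\ref{102}), and symmetrically for $(B,B)$ and $g_n$.

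It remains to verify clause (\ref{300}). For (\ref{a}) I would invoke the reformulation recorded just before the theorem: since $f_n$ and $g_n$ are single-valued, (\ref{a}) is equivalent to requiring that for each $a \in A_{n+1}$ one has $f_n(a) = t$ if and only if $g_n(\tau_{n+1}(a)) = \tau_n(t)$. Substituting $\tau_{n+1} = \tau_n = \tau$ turns this into precisely hypothesis (\ref{a101}). For (\ref{c}), the components of $I_{n+1}\setminus A_{n+1}$ are exactly the components of $I\setminus A$, and substituting $\tau_n = \tau_{n+1} = \tau$ into the two one-sided limit equivalences reduces them verbatim to the equivalences assumed in (\ref{b101}). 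Thus all the conditions of Definition \ref{d:pattern} hold for the chosen data, which is what the theorem asserts.

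The statement is essentially bookkeeping, so I expect no serious obstacle; the only points that need care are confirming that the telescoping composition genuinely collapses to $\tau$ for every index (so that the index shift between $\tau_n$ and $\tau_{n+1}$ never matters here) and correctly passing between the multivalued formulation of (\ref{a}) in Definition \ref{d:pattern} and its single-valued equivalent, which is exactly what lets the single bijection $\tau$ stand in for the separate families $\varphi_n$, $\psi_n$ and $\tau_n$.
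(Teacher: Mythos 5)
Your proposal is correct and follows exactly the paper's own argument: the paper likewise sets $\tau_1=\tau$, $\varphi_n=\mathrm{id}_A$, $\psi_n=\mathrm{id}_B$, and observes that conditions (\ref{100}), (\ref{200}) and (\ref{300}) of Definition \ref{d:pattern} then reduce to the hypotheses. You simply spell out the bookkeeping (the collapse of the telescoping composition to $\tau$ and the single-valued reformulation of (\ref{a})) that the paper dismisses as obvious.
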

		\begin{proof}
Obviously (\ref{100}) from Definition \ref{d:pattern} holds true. If $\tau_1=\tau$ and for each positive integer $n$, $\varphi_n=\textup{id}_A$ and $\psi_n=\textup{id}_B$, then (\ref{200}) and (\ref{300}) from Definition \ref{d:pattern} are also satisfied. 
		\end{proof}

	\begin{example}
		Let $f: I \rightarrow I$ and $g:J \rightarrow J$ be Markov interval maps with respect to Markov partitions $A$ for $f$ and $B$ for $g$ that follow the same pattern 
		with respect to $A$ and $B$. Then obviously, (\ref{102}) from Theorem \ref{corcor} is satisfied and there is a unique strictly increasing bijection $\tau : A \rightarrow B$ (since $|A| = |B| < \infty$). Then also (\ref{202}) from Theorem \ref{corcor} holds true (since $f$ and $g$ follow the same pattern with respect to $A$ and $B$).
		
	So, all the conditions from Theorem \ref{corcor} are satisfied. Therefore, the inverse sequences
		$\{I,f\}_{n=1}^{\infty}$ and $\{J,g\}_{n=1}^{\infty}$	also follow the same pattern with respect to $(A)_{n=1}^{\infty}\in \prod_{n=1}^{\infty}2^{I}$ and $(B)_{n=1}^{\infty}\in \prod_{n=1}^{\infty}2^{J}$.
	\end{example}
	
\section{The main result}
	In this section we prove Theorem \ref{main} -- the main result of the paper. We will need the following lemma (which generalizes \cite[Theorem 4.5.]{{ingram}}, where generalized inverse sequences with upper semicontinuous functions are used) in its proof.
	\begin{lemma}\label{tool}
	Let $\{I_n,F_n\}_{n=1}^{\infty}$ and $\{J_n,G_n\}_{n=1}^{\infty}$ be two generalized inverse sequences (with bonding functions that are not necessarily upper semicontinuos). If for each positive integer $n$, there is a homeomorphism $h_n:I_n\rightarrow J_n$ such that $h_n \circ F_n = G_n \circ h_{n+1}$, then $\varprojlim\{I_n,F_n\}_{n=1}^{\infty}$ and $\varprojlim\{J_n,G_n\}_{n=1}^{\infty}$ are homeomorphic.
	\end{lemma}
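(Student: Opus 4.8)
The plan is to construct the homeomorphism directly from the given homeomorphisms $h_n$ by taking their product. Define $H : \prod_{n=1}^{\infty} I_n \rightarrow \prod_{n=1}^{\infty} J_n$ by $H(x_1, x_2, x_3, \ldots) = (h_1(x_1), h_2(x_2), h_3(x_3), \ldots)$. Since each $h_n$ is a homeomorphism, $H$ is a homeomorphism of the product spaces (with the product topology), and I would simply show that $H$ carries $\varprojlim\{I_n, F_n\}_{n=1}^{\infty}$ bijectively onto $\varprojlim\{J_n, G_n\}_{n=1}^{\infty}$. Because both inverse limits carry the subspace topology inherited from the respective products, this makes the restriction of $H$ to $\varprojlim\{I_n, F_n\}_{n=1}^{\infty}$ a homeomorphism onto $\varprojlim\{J_n, G_n\}_{n=1}^{\infty}$.

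First I would fix how to read the hypothesis $h_n \circ F_n = G_n \circ h_{n+1}$. Since $F_n(x)$ is a closed subset of $I_n$ and $h_n$ is a homeomorphism, $h_n \circ F_n$ is the induced action on closed sets, $x \mapsto h_n(F_n(x)) = \{h_n(y) \mid y \in F_n(x)\}$, which is a closed subset of $J_n$; likewise $G_n \circ h_{n+1}$ sends $x \in I_{n+1}$ to the closed set $G_n(h_{n+1}(x)) \subseteq J_n$. Thus the hypothesis says precisely that
$$ h_n(F_n(x)) = G_n(h_{n+1}(x)) \quad \textup{for every } x \in I_{n+1} \textup{ and every } n. $$

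Next I would verify that $H$ respects membership. Let $\mathbf{x} = (x_1, x_2, \ldots) \in \varprojlim\{I_n, F_n\}_{n=1}^{\infty}$, so $x_n \in F_n(x_{n+1})$ for each $n$. Applying $h_n$ and the displayed identity gives $h_n(x_n) \in h_n(F_n(x_{n+1})) = G_n(h_{n+1}(x_{n+1}))$; writing $y_n = h_n(x_n)$, this reads $y_n \in G_n(y_{n+1})$ for each $n$, so $H(\mathbf{x}) \in \varprojlim\{J_n, G_n\}_{n=1}^{\infty}$. The reverse direction is entirely symmetric: starting from $(y_1, y_2, \ldots) \in \varprojlim\{J_n, G_n\}_{n=1}^{\infty}$ and setting $x_n = h_n^{-1}(y_n)$, the same identity gives $y_n \in G_n(h_{n+1}(x_{n+1})) = h_n(F_n(x_{n+1}))$, hence $x_n \in F_n(x_{n+1})$, so the $H$-preimage lies in $\varprojlim\{I_n, F_n\}_{n=1}^{\infty}$. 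Together these show that $H$ maps one inverse limit bijectively onto the other.

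There is no serious obstacle here: the argument is purely set-theoretic, combined with the elementary fact that a product of homeomorphisms restricts to a homeomorphism between subspaces that correspond under it. In particular, nothing about upper semicontinuity of the bonding functions, nor about nonemptiness or compactness of the limits, is used, which is exactly why the lemma extends \cite[Theorem 4.5]{ingram} to the non-upper-semicontinuous setting. The only point that demands a moment's care is the convention that $h_n \circ F_n$ denotes the hyperspace-induced action of $h_n$ on the closed set $F_n(x)$; once that reading is fixed, the two membership conditions defining the inverse limits translate into each other verbatim.
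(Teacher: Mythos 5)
Your proof is correct and takes essentially the same approach as the paper: both apply the homeomorphisms $h_n$ coordinate-wise and use the set-level reading of $h_n \circ F_n = G_n \circ h_{n+1}$ (i.e.\ $h_n(F_n(x)) = G_n(h_{n+1}(x))$) to transfer the membership conditions $x_n \in F_n(x_{n+1})$ and $y_n \in G_n(y_{n+1})$ into each other. The only cosmetic difference is that you restrict the ambient product homeomorphism $\prod_{n=1}^{\infty} h_n$ to the inverse limits, which packages continuity, the inverse map, and the possibly-empty case in one stroke, whereas the paper defines the map and its inverse directly on the inverse limits and verifies these points explicitly.
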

	\begin{proof}
  For any $\mathbf x=(x_1,x_2,x_3,\ldots)\in \varprojlim\{I_n,F_n\}_{n=1}^{\infty}$ (if such an $\mathbf x$ exists) we define $$
	h(\mathbf x)=(h_1(x_1),h_2(x_2),h_3(x_3),\ldots).
	$$
	Obviously, $h:\varprojlim\{I_n,F_n\}_{n=1}^{\infty}\rightarrow \varprojlim\{J_n,G_n\}_{n=1}^{\infty}$, since for any positive integer $n$, $h_n \circ F_n = G_n \circ h_{n+1}$ and therefore $h_n(x_n)\in G_n(h_{n+1}(x_{n+1}))$. It follows that
	\begin{enumerate}
	\item if $\varprojlim\{I_n,F_n\}_{n=1}^{\infty}\neq \emptyset$, then $\varprojlim\{J_n,G_n\}_{n=1}^{\infty}\neq \emptyset$, and 
	\item $h:\varprojlim\{I_n,F_n\}_{n=1}^{\infty}\rightarrow \varprojlim\{J_n,G_n\}_{n=1}^{\infty}$ is a continuous function (if the inverse limit $\varprojlim\{I_n,F_n\}_{n=1}^{\infty}\neq \emptyset$).
	\end{enumerate}
	Next, for any $\mathbf y=(y_1,y_2,y_3,\ldots)\in \varprojlim\{J_n,G_n\}_{n=1}^{\infty}$ (if such an $\mathbf y$ exists) we define $$
	g(\mathbf y)=(h_1^{-1}(y_1),h_2^{-1}(y_2),h_3^{-1}(y_3),\ldots).
	$$
	We show that $g:\varprojlim\{J_n,G_n\}_{n=1}^{\infty}\rightarrow \varprojlim\{I_n,F_n\}_{n=1}^{\infty}$. Since for any positive integer $n$, $h_n \circ F_n = G_n \circ h_{n+1}$ holds true, it follows that $h_n^{-1}\circ G_n=F_n\circ h_{n+1}^{-1}$. Therefore $h_n^{-1}(y_n)\in F_n(h_{n+1}^{-1}(y_{n+1}))$. This means that
	\begin{enumerate}
	\item if $\varprojlim\{J_n,G_n\}_{n=1}^{\infty}\neq \emptyset$, then $\varprojlim\{I_n,F_n\}_{n=1}^{\infty}\neq \emptyset$, and 
	\item $g:\varprojlim\{J_n,G_n\}_{n=1}^{\infty}\rightarrow \varprojlim\{I_n,F_n\}_{n=1}^{\infty}$ is a continuous function (if the inverse limit $\varprojlim\{J_n,G_n\}_{n=1}^{\infty}\neq \emptyset$).
	\end{enumerate}
	So, if one of the inverse limits is empty, so is the other one and they are homeomorphic. If one of them is non-empty, then so is the other one. In this case,
		$$
		g(h(\mathbf x))=(h_1^{-1}(h_1(x_1)),h_2^{-1}(h_2(x_2)),h_3^{-1}(h_3(x_3)),\ldots)=\mathbf x
		$$
		for each $\mathbf x\in \varprojlim\{I_n,F_n\}_{n=1}^{\infty}$ and 
		$$
		h(g(\mathbf y))=(h_1(h_1^{-1}(y_1)),h_2(h_2^{-1}(y_2)),h_3(h_3^{-1}(y_3)),\ldots)=\mathbf y
		$$
		for each $\mathbf y\in \varprojlim\{J_n,G_n\}_{n=1}^{\infty}$. Therefore $h$ is a homeomorphism.
	\end{proof}
	\begin{theorem}\label{main}
		Let $\{I_n,F_n\}_{n=1}^{\infty}$ and $\{J_n,G_n\}_{n=1}^{\infty}$ be two generalized inverse sequences of closed intervals and quasi Markov functions
		that follow the same pattern. Then the inverse limits $\varprojlim\{I_n,F_n\}_{n=1}^{\infty}$ and $\varprojlim\{J_n,G_n\}_{n=1}^{\infty}$ are homeomorphic. 
	\end{theorem}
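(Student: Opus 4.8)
The plan is to reduce everything to Lemma \ref{tool}: it suffices to construct, for every $n$, a homeomorphism $h_n\colon I_n\to J_n$ with $h_n\circ F_n = G_n\circ h_{n+1}$, after which the two inverse limits are homeomorphic. I would build each $h_n$ as a strictly increasing homeomorphism extending the bijection $\tau_n\colon A_n\to B_n$ furnished by Definition \ref{d:pattern}, proceeding by recursion on $n$ in which the level-$n$ commuting relation is what produces $h_{n+1}$ from $h_n$. A preliminary fact is used throughout: a strictly increasing bijection $\sigma\colon A\to B$ between totally disconnected closed subsets (containing the endpoints) carries the components of the complement of $A$ order-preservingly and bijectively onto those of the complement of $B$, since a point of $B$ strictly between $\sigma(a)$ and $\sigma(b)$ would be $\sigma(d)$ for some $a<d<b$; moreover any strictly increasing surjection of one closed interval onto another is automatically continuous, hence a homeomorphism, because such a surjection can have no jumps.

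For the base case I would take $h_1$ to be the affine extension of $\tau_1$ across each component of $I_1\setminus A_1$, which by the preliminary fact is a strictly increasing homeomorphism restricting to $\tau_1$ on $A_1$. For the recursive step, assuming $h_n$ is a strictly increasing homeomorphism with $h_n|_{A_n}=\tau_n$, I would set $h_{n+1}=\tau_{n+1}$ on $A_{n+1}$ and, on each component $C=(a,b)$ of $I_{n+1}\setminus A_{n+1}$, define
\[
h_{n+1}|_C=\bigl(G_n|_{C'}\bigr)^{-1}\circ h_n\circ\bigl(F_n|_C\bigr),\qquad C'=(\tau_{n+1}(a),\tau_{n+1}(b)).
\]
Here $F_n|_C$ is single-valued, strictly monotone and continuous by the Markov-pair condition, mapping $C$ onto the open interval between its one-sided limits $c=\lim_{t\to a+}F_n(t)$ and $d=\lim_{t\to b-}F_n(t)$, both in $A_n$; by the preliminary fact $C'$ is a component of $J_{n+1}\setminus B_{n+1}$, so $G_n|_{C'}$ is a monotone homeomorphism onto the interval between $\tau_n(c)$ and $\tau_n(d)$ (condition \ref{c}), and each factor on the right is a homeomorphism, so $h_{n+1}|_C\colon C\to C'$ is well defined.

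I would then check that $h_{n+1}$ is a strictly increasing homeomorphism and that it commutes. Condition \ref{c} makes the monotonicity types of $F_n|_C$ and $G_n|_{C'}$ agree, so a brief case analysis shows $h_{n+1}|_C$ is strictly increasing with one-sided limits $\tau_{n+1}(a)$ and $\tau_{n+1}(b)$; since the components are matched order-preservingly and $\tau_{n+1}$ is increasing on $A_{n+1}$, the map $h_{n+1}$ is a strictly increasing surjection of $I_{n+1}$ onto $J_{n+1}$, hence a homeomorphism extending $\tau_{n+1}$, which closes the induction. The commuting relation holds on each $C$ by the very definition of $h_{n+1}|_C$, namely $G_n\circ h_{n+1}=h_n\circ F_n$ there. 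On $A_{n+1}$, for $a\in A_{n+1}$ one has $\textup{Bd}(F_n(a))\subseteq A_n$ by condition \ref{iki} of Definition \ref{d:pair}, so $h_n$ equals $\tau_n$ on it, and condition \ref{a} gives $h_n(\textup{Bd}(F_n(a)))=\textup{Bd}(G_n(\tau_{n+1}(a)))$; Remark \ref{b} then makes $h_n|_{F_n(a)}$ a homeomorphism onto $G_n(\tau_{n+1}(a))$, whence $h_n(F_n(a))=G_n(\tau_{n+1}(a))=G_n(h_{n+1}(a))$. Thus $h_n\circ F_n=G_n\circ h_{n+1}$ on all of $I_{n+1}$, and Lemma \ref{tool} completes the argument.

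I expect the main obstacle to be confirming that the recursively defined $h_{n+1}$ is honestly a homeomorphism when $A_{n+1}$ and $B_{n+1}$ are uncountable with many accumulation points, where pointwise continuity at limit points of $A_{n+1}$ is awkward to verify directly; the device of proving strict monotonicity together with surjectivity onto an interval, and then invoking that such maps cannot jump, is what makes this tractable. The two pattern conditions each do indispensable work: condition \ref{c} aligns the monotonicity of $F_n|_C$ and $G_n|_{C'}$ so that $h_{n+1}|_C$ emerges increasing with the correct boundary values, while condition \ref{a} (through Remark \ref{b}) is precisely what transports the multivalued images $F_n(a)$ at the points $a\in A_{n+1}$ onto $G_n(\tau_{n+1}(a))$, something the component construction alone cannot control.
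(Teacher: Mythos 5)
Your proposal is correct and follows essentially the same route as the paper's proof: the same recursive construction of strictly increasing homeomorphisms $h_n$ extending $\tau_n$, with the identical formula $h_{n+1}|_C=(G_n|_{C'})^{-1}\circ h_n\circ F_n|_C$ on components, the same use of condition (c) to match monotonicity types, the same use of condition (a) and Remark \ref{b} to verify commutativity at points of $A_{n+1}$, and the same final appeal to Lemma \ref{tool}. The only cosmetic difference is that you isolate as a preliminary fact the order-preserving matching of complementary components under $\tau_{n+1}$, which the paper establishes inline during its surjectivity argument.
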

	
	\begin{proof}
		Without loss of generality suppose that both inverse limits are non-empty. Since the inverse sequences $\{I_n,F_n\}_{n=1}^{\infty}$ and $\{J_n,G_n\}_{n=1}^{\infty}$ follow the same pattern,
		there are $(A_n)_{n=1}^{\infty}\in \prod_{n=1}^{\infty}2^{I_n}$ and $(B_n)_{n=1}^{\infty}\in \prod_{n=1}^{\infty}2^{J_n}$ 
		satisfying  Definition \ref{d:pattern}.
		
				Let $\tau_1 : A_1 \rightarrow B_1$ be a strictly increasing bijection and for each positive integer $n$, let $\varphi_n:A_{n+1}\rightarrow A_n$ and $\psi_n : B_{n+1}\rightarrow B_n$ be strictly increasing bijections such that (\ref{a})--(\ref{c}) from Definition \ref{d:pattern} are satisfied.

		Next, let $h_1 : I_1 \to J_1$ be any homeomorphism, such that for each $a \in A_1$, $h_1(a) = \tau_1 (a)$.
		Obviously such a homeomorphism exists: For any component $C$ of $I_1 \setminus A_1$, it follows that $\textup{Cl}(C) = [a_1,a_2]$ for some $a_1,a_2 \in A_1$. 
		Then we can define $h_1$ on $\textup{Cl}(C)$ to be the linear function taking $a_1$ to $\tau_1(a_1)$ and $a_2$ to $\tau_1(a_2)$.
		Since $h_1$ is now defined in such a way that it is a strictly increasing surjection $I_1 \rightarrow J_1$, it is therefore a homeomorphism.

		Suppose that we have already defined homeomorphisms $h_i:I_i\rightarrow J_i$, $i=1,2,3,\ldots,n$, such that for each $a \in A_i$, $h_i(a) = \tau_i(a)$ for any $i=1,2,3,\ldots ,n$, and $h_i \circ F_i = G_i \circ h_{i+1}$ for any $i=1,2,3,\ldots ,n-1$. Then we construct the homeomorphism $h_{n+1}$ as follows.
	
		Obviously, $\tau_{n+1}:A_{n+1}\rightarrow B_{n+1}$ is also a strictly increasing bijection (it is a composition of strictly increasing bijections). 
			Next, let $h_{n+1} : I_{n+1} \to J_{n+1}$ be the function defined by
		$$h_{n+1} (t) =
			\begin{cases}
				\tau_{n+1} (t)\text{;} & t \in A_{n+1}, \\
				(G_n|_{(\tau_{n+1}(a_1),\tau_{n+1}(a_2))})^{-1} (h_n(F_n(t))) \text{;} & t \in C,
			\end{cases}
		$$
		where $C=(a_1,a_2)$ is the component of $I_{n+1} \backslash A_{n+1}$, $a_1,a_2\in A_{n+1}$, such that $t \in C$.
		
		First we prove that $h_{n+1}$ is surjective. Let $t_0\in J_{n+1}$. If $t_0\in B_{n+1}$, then $s_0=\tau_{n+1}^{-1}(t_0)\in I_{n+1}$ is the point such that $t_0=\tau_{n+1}(s_0)=h_{n+1}(s_0)$. If $t_0\not \in B_{n+1}$, then there is a component $D=(b_1,b_2)$ of $J_{n+1}\setminus B_{n+1}$, $b_1,b_2\in B_{n+1}$, such that $t_0\in D$. Let $a_1=\tau_{n+1}^{-1}(b_1)$ and  $a_2=\tau_{n+1}^{-1}(b_2)$. Then $C=(a_1,a_2)$ is a component of $I_{n+1}\setminus A_{n+1}$ (since $\tau_{n+1}:A_{n+1}\rightarrow B_{n+1}$ is a strictly increasing bijection). Since $F_{n}$ and $G_n$ are quasi Markov with respect to $(A_{n+1},A_n)$ and $(B_{n+1},B_n)$, respectively, it follows that the restrictions $F_{n}|_{C}$ and $G_n|_{D}$ are continuous and strictly monotone - either both strictly increasing or both strictly decreasing continuous functions (taking into account (\ref{c}) from Definition \ref{d:pattern} and the fact that $F_n$ and $G_n$ are quasi Markov). Then $h:D\rightarrow C$, defined by $h(t)=(F_{n}|_{C})^{-1}(h_n^{-1}(G_n|_{D}(t)))$ for each $t\in D$, is a strictly increasing function. Taking $s_0=h(t_0)$, we get that 
		$$
		h_{n+1}(s_0)=((G_n|_{D})^{-1} \circ h_n\circ F_n\circ (F_{n}|_{C})^{-1}\circ h_n^{-1}\circ G_n|_{D})(t_0)=t_0
		$$
We have proved that $h_{n+1}$ is surjective. Also, since $h_{n+1}$ is strictly increasing on each component $C$ of $I_{n+1}\setminus A_{n+1}$, it is a strictly increasing function $I_{n+1}\rightarrow J_{n+1}$.
	Therefore, $h_{n+1}$ is defined in such a way that it is a strictly increasing surjection $I_{n+1} \rightarrow J_{n+1}$ and it is therefore a homeomorphism.

	Next we show that $h_n \circ F_n = G_n \circ h_{n+1}$ for each positive integer $n$, i.e.\ we prove that the diagram (\ref{equat1}) commutes.

		\begin{center}
		\begin{equation}	\label{equat1}	
		\begin{tikzpicture}[node distance=5.00em, auto]
  \node (X) {$I_1$};
  \node (Y) [right of=X] {$I_{2}$};
  \node (Z) [right of=Y] {$I_{3}$};
	 \node (a) [right of=Z] {$\ldots$};
	 \node (b) [right of=a] {$I_{n}$};
	 \node (c) [right of=b] {$I_{n+1}$};
	 \node (d) [right of=c] {$\ldots$};
  \draw[<-] (X) to node {$F_1$} (Y);
  \draw[<-] (Y) to node {$F_2$} (Z);
  \draw[<-] (Z) to node {$F_3$} (a);
	\draw[<-] (a) to node {$F_{n-1}$} (b);
	\draw[<-] (b) to node {$F_n$} (c);
	\draw[<-] (c) to node {$F_{n+1}$} (d);
	\node (X1) [below of=X] {$J_1$};
  \node (Y1) [right of=X1] {$J_{2}$};
  \node (Z1) [right of=Y1] {$J_{3}$};
	 \node (a1) [right of=Z1] {$\ldots$};
	 \node (b1) [right of=a1] {$J_{n}$};
	 \node (c1) [right of=b1] {$J_{n+1}$};
	 \node (d1) [right of=c1] {$\ldots$};
  \draw[<-] (X1) to node {$G_1$} (Y1);
  \draw[<-] (Y1) to node {$G_2$} (Z1);
  \draw[<-] (Z1) to node {$G_3$} (a1);
	\draw[<-] (a1) to node {$G_{n-1}$} (b1);
	\draw[<-] (b1) to node {$G_n$} (c1);
	\draw[<-] (c1) to node {$G_{n+1}$} (d1);
	\draw[->] (X) to node {$h_1$} (X1);
  \draw[->] (Y) to node {$h_2$} (Y1);
  \draw[->] (Z) to node {$h_3$} (Z1);
	\draw[->] (b) to node {$h_n$} (b1);
	\draw[->] (c) to node {$h_{n+1}$} (c1);
\end{tikzpicture}
	\end{equation}
	\end{center}

Let $n$ be a positive integer and let $t\in I_{n+1}$ be any point. If $t\in A_{n+1}$, then $(h_n \circ F_n)(t)=h_n(F_n(t))$, and $(G_n \circ h_{n+1})(t)=G_n(h_{n+1}(t))=G_n(\tau_{n+1}(t))$. By (\ref{a}) of Definition \ref{d:pattern}, $G_n(\tau_{n+1}(t))$ is homeomorphic to $F_n(t)$. 
Even more, $h_n$ is a strictly increasing homeomorphism such that  
$$h_n(\textup{Bd}(F_n(t))) = \tau_n(\textup{Bd}(F_n(t))) = \textup{Bd}(G_n(\tau_{n+1}(t))).$$ 
Therefore, by Remark \ref{b}, $h_{n}:I_{n}\rightarrow J_{n}$ is a homeomorphism taking $F_n(t)$ onto $G_n(\tau_{n+1}(t))$. So, $(h_n \circ F_n)(t)=(G_n \circ h_{n+1})(t)$. If $t\not \in A_{n+1}$, then let $C$ be the component of $I_{n+1}\setminus A_{n+1}$ such that $t\in C$. Then $(G_n \circ h_{n+1})(t)=G_n(h_{n+1}(t))=G_n((G_n|_{(\tau_{n+1}(a_1),\tau_{n+1}(a_2))})^{-1} (h_n(F_n(t))))=h_n(F_n(t))$. We have proved that	$h_n \circ F_n = G_n \circ h_{n+1}$.

		By Lemma \ref{tool}, the inverse limits $\varprojlim\{I_n,F_n\}_{n=1}^{\infty}$ and $\varprojlim\{J_n,G_n\}_{n=1}^{\infty}$ are homeomorphic. 
	\end{proof}

The following corollary easily follows.

	\begin{corollary}\label{cor}
		Let $\{I_n,f_n\}_{n=1}^{\infty}$ and $\{J_n,g_n\}_{n=1}^{\infty}$ be two inverse sequences of closed intervals and quasi Markov functions $f_n:I_{n+1}\rightarrow I_n$ and $g_n:J_{n+1}\rightarrow J_n$ 
		that follow the same pattern. Then $\varprojlim\{I_n,f_n\}_{n=1}^{\infty}$ and $\varprojlim\{I_n,g_n\}_{n=1}^{\infty}$ are homeomorphic. 
	\end{corollary}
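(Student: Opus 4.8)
The plan is to deduce this from Theorem \ref{main} by passing from the ordinary inverse sequences to the associated generalized inverse sequences via the identification introduced in Section 2. Recall that any function $f:I\to J$ is identified with the set-valued function $F:I\to 2^J$ given by $F(t)=\{f(t)\}$ for each $t\in I$, and that under this identification the graphs coincide. First I would introduce, for each $n$, the functions $F_n:I_{n+1}\to 2^{I_n}$ and $G_n:J_{n+1}\to 2^{J_n}$ defined by $F_n(t)=\{f_n(t)\}$ and $G_n(t)=\{g_n(t)\}$, thereby obtaining generalized inverse sequences $\{I_n,F_n\}_{n=1}^{\infty}$ and $\{J_n,G_n\}_{n=1}^{\infty}$ to which Theorem \ref{main} is meant to apply.

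The first routine check is that the two notions of inverse limit coincide as subspaces of the same product. A point $\mathbf x=(x_1,x_2,\ldots)$ lies in $\varprojlim\{I_n,f_n\}_{n=1}^{\infty}$ exactly when $x_n=f_n(x_{n+1})$ for every $n$, which by the definition of $F_n$ is exactly the condition $x_n\in F_n(x_{n+1})$, i.e.\ $\mathbf x\in\varprojlim\{I_n,F_n\}_{n=1}^{\infty}$. Hence $\varprojlim\{I_n,f_n\}_{n=1}^{\infty}=\varprojlim\{I_n,F_n\}_{n=1}^{\infty}$ as subspaces of $\prod_{n=1}^{\infty}I_n$, and likewise $\varprojlim\{J_n,g_n\}_{n=1}^{\infty}=\varprojlim\{J_n,G_n\}_{n=1}^{\infty}$.

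Next I would verify that $\{I_n,F_n\}_{n=1}^{\infty}$ and $\{J_n,G_n\}_{n=1}^{\infty}$ follow the same pattern in the sense of Definition \ref{d:pattern}. Since $f_n$ and $g_n$ are quasi Markov, so are the singleton-valued $F_n$ and $G_n$ by the remarks following Definitions \ref{d:pair} and \ref{d:markov}, which gives condition (\ref{100}); the strictly increasing bijections $\tau_1$, $\varphi_n$, $\psi_n$ witnessing that the $f$- and $g$-sequences follow the same pattern serve verbatim, so conditions (\ref{200}) and (\ref{c}) hold because the one-sided limits agree under the identification. The only point deserving attention is condition (\ref{a}): for $a\in A_{n+1}$ the sets $F_n(a)=\{f_n(a)\}$ and $G_n(\tau_{n+1}(a))=\{g_n(\tau_{n+1}(a))\}$ are singletons, so $\Bd(F_n(a))=F_n(a)$ and the required homeomorphism is simply the unique map between one-point sets. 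As the excerpt notes, in this singleton case (\ref{a}) reduces precisely to the hypothesis that $f_n(a)=t$ if and only if $g_n(\tau_{n+1}(a))=\tau_n(t)$, which is part of the assumption that the sequences follow the same pattern.

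With both generalized inverse sequences shown to follow the same pattern, Theorem \ref{main} yields a homeomorphism $\varprojlim\{I_n,F_n\}_{n=1}^{\infty}\cong\varprojlim\{J_n,G_n\}_{n=1}^{\infty}$, and combining this with the two identifications from the second step gives the desired homeomorphism $\varprojlim\{I_n,f_n\}_{n=1}^{\infty}\cong\varprojlim\{J_n,g_n\}_{n=1}^{\infty}$. I do not anticipate a genuine obstacle: the whole content is bookkeeping confirming that ordinary quasi Markov functions are the singleton-valued special case already covered by the set-valued framework, so the only step that is not completely automatic is checking that the singleton reduction of condition (\ref{a}) matches the corollary's hypothesis.
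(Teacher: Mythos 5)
Your proposal is correct and follows essentially the same route as the paper: the paper's own proof also interprets $f_n$ and $g_n$ as the singleton-valued functions $F_n(t)=\{f_n(t)\}$, $G_n(t)=\{g_n(t)\}$ and then invokes Theorem \ref{main}. The paper states this in two sentences and leaves implicit the bookkeeping you spell out (identity of the inverse limits as subspaces of $\prod_{n=1}^{\infty}I_n$ and the transfer of the same-pattern conditions, which is automatic since Definition \ref{d:pattern} is formulated for, and explicitly noted to include, this singleton identification), so your extra verification is fully consistent with, and a harmless elaboration of, the published argument.
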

\begin{proof}
The functions $f_n:I_{n+1}\rightarrow I_n$ and $g_n:J_{n+1}\rightarrow J_n$ may be interpreted as functions $F_n:I_{n+1}\rightarrow 2^{I_n}$ and $G_n:J_{n+1}\rightarrow 2^{J_n}$ ($F_n(t)=\{f_n(t)\}$ for each $t\in I_{n+1}$ and $G_n(t)=\{g_n(t)\}$ for each $t\in J_{n+1}$). Therefore, by Theorem \ref{main}, $\varprojlim\{I_n,f_n\}_{n=1}^{\infty}$ and $\varprojlim\{I_n,g_n\}_{n=1}^{\infty}$ are homeomorphic.
\end{proof}
We conclude the paper with the following three illustrative examples. In Example \ref{ex:1}, continuous functions that are not surjective are used as bonding functions, and in Example \ref{ex:2}, functions that are neither surjective nor continuous are used as bonding functions. In both examples, all the inverse limits are non-empty. In Example \ref{ex:3}, the quasi Markov function $f:[0,1]\rightarrow [0,1]$ is constructed such that $\varprojlim\{[0,1],f\}_{n=1}^{\infty}$ is empty.
\begin{example}\label{ex:1}
Let $f,g:[0,1]\rightarrow [0,1]$ be skew-tent functions with top vertices $(a,b)$ and $(c,d)$, respectively, where $a>b>0$, $c>d>0$, $0<1-a<b$ and $0<1-c<d$ (i.e.\ the graph of $f$ is the union of two straight line segments, one from $(0,0)$ to $(a,b)$ and the other one from $(a,b)$ to $(1,0)$, and the graph of $g$ is the union of two straight line segments, one from $(0,0)$ to $(c,d)$ and the other one from $(c,d)$ to $(1,0)$), see Figure \ref{f:1}. 
\begin{figure}[h!]
\includegraphics[width=15.0em]{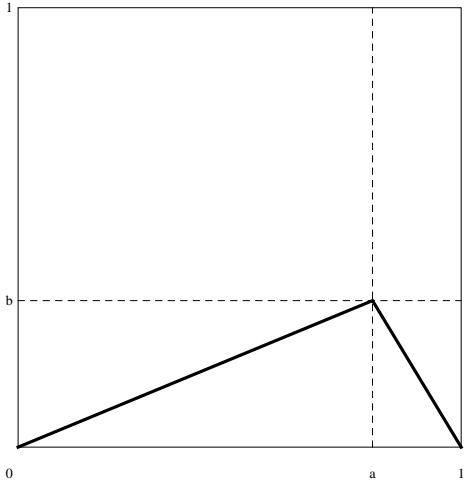}
\caption{The graph of a skew-tent function.}
\label{f:1}
\end{figure}

Let $A=\{0,a,b,f(b),f^2(b),f^3(b),\ldots \}$ and $B=\{0,c,d,g(d),g^2(d),\ldots \}$.
Both, $(f^n(b))$ and $(g^n(d))$ are strictly decreasing sequences with $\lim_{n\to \infty}f^n(b)=\lim_{n\to \infty}g^n(d)=0$.  
Therefore $A$ and $B$ are both closed in $[0,1]$. Clearly, $f$ is quasi Markov with respect to $(A,A)$ and $g$ is quasi Markov with respect to $(B,B)$. It is easily seen that $\{[0,1],f\}_{n=1}^{\infty}$ and $\{[0,1],g\}_{n=1}^{\infty}$ follow the same pattern with respect to $(A)_{n=1}^{\infty}\in \prod_{n=1}^{\infty}2^{[0,1]}$ and $(B)_{n=1}^{\infty}\in \prod_{n=1}^{\infty}2^{[0,1]}$. Therefore, by Corollary \ref{cor}, the inverse limits $\varprojlim\{[0,1],f\}_{n=1}^{\infty}$ and $\varprojlim\{[0,1],g\}_{n=1}^{\infty}$ are homeomorphic.

\end{example}
\begin{example}\label{ex:2}
Let $f_{p,q}:[0,1]\rightarrow [0,1]$ be a function defined by 
$$
f_{p,q} (t) =
			\begin{cases}
				qt \text{;} & t \leq p, \\
				\frac{1}{1-p}(1-t)  \text{;} & t > p,
			\end{cases}
$$
where $0 < q  < p < 1$.

\begin{figure}[h!]
\includegraphics[width=15.0em]{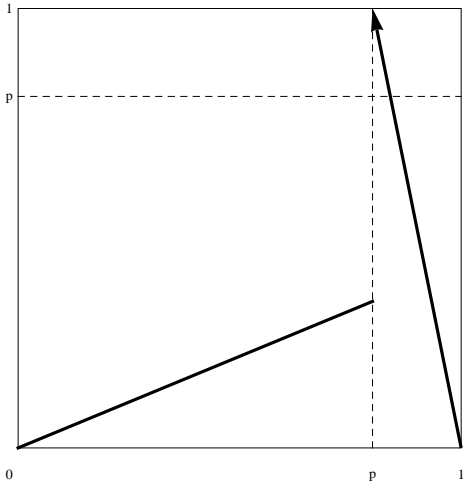}
\caption{The graph of $f_{p,q}$.}
\label{}
\end{figure}

Then, for all $p_1, q_1,p_2,q_2 \in (0,1)$, $ q_1  < p_1$ and $ q_2  < p_2$, the inverse limits $\varprojlim\{[0,1],f_{p_1,q_1}\}_{n=1}^{\infty}$ and $\varprojlim\{[0,1],f_{p_2,q_2}\}_{n=1}^{\infty}$ are homeomorphic since the following holds true. 
Obviously, 
$A=\{0,1,p_1,f_{p_1,q_1}(p_1),f_{p_1,q_1}^2(p_1),f_{p_1,q_1}^3(p_1),\ldots \}$ and $B=\{0,1,p_2,f_{p_2,q_2}(p_2),f_{p_2,q_2}^2(p_2),\ldots \}$
are non-empty closed subsets of $[0,1]$ that are both totally disconnected
(both, $(f_{p_1,q_1}^n(p_1))$ and $(f_{p_2,q_2}^n(p_2))$ are strictly decreasing sequences with $\lim_{n\to \infty}f_{p_1,q_1}^n(p_1)=\lim_{n\to \infty}f_{p_2,q_2}^n(p_2)=0$). 
It is easy to check that $f$ is quasi Markov with respect to $(A,A)$ and $g$ is quasi Markov with respect to $(B,B)$. Also, one can easily prove that $\{[0,1],f_{p_1,q_1}\}_{n=1}^{\infty}$ and $\{[0,1],f_{p_2,q_2}\}_{n=1}^{\infty}$ follow the same pattern with respect to $(A)_{n=1}^{\infty}\in \prod_{n=1}^{\infty}2^{[0,1]}$ and $(B)_{n=1}^{\infty}\in \prod_{n=1}^{\infty}2^{[0,1]}$. Therefore, by Corollary \ref{cor}, $\varprojlim\{[0,1],f_{p_1,q_1}\}_{n=1}^{\infty}$ and $\varprojlim\{[0,1],f_{p_2,q_2}\}_{n=1}^{\infty}$ are homeomorphic.
\end{example}

\begin{example}\label{ex:3}
Let $f:[0,1]\rightarrow [0,1]$ be the function defined by 
$$
f (t) =
			\begin{cases}
				\frac{1}{2}t \text{;} & t>0, \\
				1 \text{;} & t=0,
			\end{cases}
$$
for each $t\in [0,1]$.

Let $A=\{0,1,f(1),f^2(1),f^3(1),\ldots \}$.
It is easily seen that $f$ is a quasi Markov functions with respect to $(A,A)$ and that $\varprojlim\{[0,1],f\}_{n=1}^{\infty}$ is empty.
\end{example}
%
%\noindent{\bf Acknowledgment.} The authors wish to thank to the anonymous referee for constructive remarks.
%    Text of article.

%    Bibliographies can be prepared with BibTeX using amsplain,
%    amsalpha, or (for "historical" overviews) natbib style.
\bibliographystyle{amsplain}
%    Insert the bibliography data here.
%%%%%%%%%%%%%%%%%%%%%%%%%%%%%%%%%%%%%%%%%%%%%%%%%%%%%%%%%%%%%%%%%%%%%%%%%%%%%%%%%
%%%%        R E F E R E N C E S

Iztok Bani\v c and Matev\v z \v Crepnjak 

(iztok.banic@um.si and matevz.crepnjak@um.si)

Faculty of Natural Sciences and Mathematics, 

University of Maribor, 

Koro\v{s}ka 160, SI-2000 Maribor, Slovenia

\end{document}